\documentclass[11pt,requationo]{amsart}
\usepackage[colorlinks=true,allcolors=blue,backref=page]{hyperref}
\usepackage{color}
\usepackage{amsmath, amssymb, amsthm}

\usepackage{mathrsfs}
\usepackage{mathtools}
\usepackage[noabbrev,capitalize,nameinlink]{cleveref}
\usepackage{aliascnt}
\usepackage{fullpage}
\usepackage[noadjust]{cite}
\usepackage{graphics}
\usepackage{pifont}
\usepackage{tikz}
\usepackage{tikz-cd}
\usepackage{bbm}
\usepackage[T1]{fontenc}

\usetikzlibrary{arrows.meta}

\usepackage{environ}
\usepackage{framed}
\usepackage{url}
\usepackage[linesnumbered,ruled,vlined]{algorithm2e}
\usepackage[noend]{algpseudocode}
\usepackage[labelfont=bf]{caption}
\usepackage{cite}
\usepackage{framed}
\usepackage[framemethod=tikz]{mdframed}
\usepackage{appendix}
\usepackage{graphicx}
\usepackage[textsize=tiny]{todonotes}
\usepackage{tcolorbox}
\usepackage{enumerate}
\usepackage[shortlabels]{enumitem}
\usepackage{physics}
\allowdisplaybreaks[1]

\apptocmd{\sloppy}{\hbadness 10000\relax}{}{} 

\crefname{equation}{}{}
\crefname{algocf}{Algorithm}{Algorithms}
\crefname{equation}{}{} 
\crefname{conjecture}{Conjecture}{Conjectures} 
\AtBeginEnvironment{appendices}{\crefalias{section}{appendix}} 

\crefformat{enumi}{#2#1#3}
\crefrangeformat{enumi}{#3#1#4 to~#5#2#6}
\crefmultiformat{enumi}{#2#1#3}%
{ and~#2#1#3}{, #2#1#3}{ and~#2#1#3}

\usepackage[color,final]{showkeys} 

\colorlet{refkey}{orange!20}
\colorlet{labelkey}{blue!30}

\crefname{algocf}{Algorithm}{Algorithms}

\numberwithin{equation}{section}
\newtheorem{theorem}{Theorem}[section]

\newaliascnt{proposition}{theorem}

\aliascntresetthe{proposition}
\crefname{proposition}{Proposition}{Propositions}

\newaliascnt{lemma}{theorem}

\aliascntresetthe{lemma}
\crefname{lemma}{Lemma}{Lemmas}

\newaliascnt{corollary}{theorem}

\aliascntresetthe{corollary}
\crefname{corollary}{Corollary}{Corollaries}

\crefname{subsubsection}{Step}{Steps}
\newtheorem{claim}[theorem]{Claim}
\crefname{claim}{Claim}{Claims}

\newtheorem*{question*}{Question}

\theoremstyle{definition}

\newtheorem*{definition*}{Definition}

\theoremstyle{remark}
\newtheorem*{remark}{Remark}

\newcommand{\mb}{\mathbb}

\newcommand{\on}{\operatorname}

\renewcommand{\ge}{\geqslant}

\usepackage{bm}

\allowdisplaybreaks

\title{On infinite sets with no $3$ on a line}

\author{Moe Putterman}
\address{OpenAI}
\email{mputt@openai.com}

\author{Mehtaab Sawhney}
\address{OpenAI and Columbia University}
\email{msawhney@openai.com}

\author{Gregory Valiant}
\address{OpenAI and Stanford University}
\email{valiant@openai.com}

\begin{document}

\begin{abstract}
We give a construction of an infinite set of points $A$ in $\mb{R}^2$ such that any subset $P\subseteq A$ has a constant density subset $P'$ with no three points collinear and yet $A$ cannot be separated into finitely many subsets such that each subset has no three points collinear. This provides a new proof of a question of Erd\H{o}s, Ne\v{s}et\v{r}il, and R\"{o}dl. The construction was generated by an internal model at OpenAI.
\end{abstract}

\maketitle

\section{Introduction}

\begin{theorem}\label{thm:main}
There exists an infinite set $A\subseteq \mb{R}^{2}$ with the following properties: 
\begin{itemize}
\item For any subset $P$ of $n$ points, there exists a subset $P'$ of size at least $n/2$ such that no three points in $P'$ are collinear.
\item $A$ cannot be split into finitely many sets $A_1 \cup \cdots \cup A_m$ such that each $A_i$ contains no three collinear points.
\end{itemize}
\end{theorem}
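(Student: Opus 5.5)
The plan is to make the first property automatic and to put all the difficulty into the second. I will build $A$ as a disjoint union $A=R\cup B$ in which $R$ has no three collinear points and $B$ has no three collinear points. Then for any finite $P\subseteq A$ of size $n$, the larger of $P\cap R$ and $P\cap B$ has at least $n/2$ points and no three of them are collinear, which is the first property.

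For the second property, note first that the collinear triples in $A$ form a $3$-uniform hypergraph $H$, and a partition as in \cref{thm:main} is exactly a proper finite colouring of $H$. By the de Bruijn--Erd\H{o}s compactness theorem, it is enough to realise, for every $m$, a finite configuration $F_m=R_m\cup B_m$ whose collinearity hypergraph has chromatic number larger than $m$. I will then take $A$ to be the union of copies of the $F_m$, each moved by a generic projective map. Genericity ensures that no collinear triple meets two different copies, that $R=\bigcup R_m$ has no three collinear points, and likewise for $B$.

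For the finite pieces I restrict to triples of one shape: two points of $R$ and one point of $B$, realised by the lines through a point of $B$. I place $R_m$ on a fixed conic $C$, which guarantees that no three points of $R_m$ are collinear. A point $b\notin C$ and any family of lines through $b$ meet $C$ in pairs $\{r,r'\}$, and the pairs coming from one $b$ are disjoint (they belong to the involution of $C$ centred at $b$). So each $b$ contributes hyperedges $\{r,r',b\}$, one for each line through $b$ that I use. Combinatorially, then, I need a $3$-uniform hypergraph $\mathcal H_m$ on $R\sqcup B$ with the following properties:
\begin{enumerate}[(i)]
\item every edge has the form $\{r,r',b\}$;
\item for each $b$, the edges containing $b$ use pairwise disjoint pairs in $R$;
\item $\mathcal H_m$ has chromatic number larger than $m$;
\item $\mathcal H_m$ is realisable as above, with no unwanted collinearities among the points of $B$ and no extra incidences.
\end{enumerate}

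For (iii), I will use a Tutte/Erd\H{o}s--Lov\'asz-style amalgamation or probabilistic argument to get an $\mathcal H_m$ that also has large girth (say girth at least $5$, hence linear and locally tree-like). The induction would go as follows. Given $\mathcal H_{m}$ and a large set $R'$ of fresh points, any $m$-colouring makes some colour class of $R'$ large. For every $|V(\mathcal H_m)|$-subset of $R'$, attach a copy of $\mathcal H_m$ whose $B$-vertices are joined by hyperedges to pairs inside that subset, so that each $B$-vertex blocks one colour. This forces a new colour.

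The realisation step (iv) is where I expect the main obstacle, and I would treat it by induction on an ordering of the vertices. The key constraint is that a point $b$ lying on $k$ chords of $C$ requires those chords to be concurrent. Large girth helps here: when $b$ is added, the chords it must lie on join pairs of $R$-points that are otherwise unconstrained with respect to each other. So I would order the construction so that each new $b$ is placed first, at a generic point off $C$, and only afterwards are the second endpoints $r'$ of its chords placed, as the second intersection of $C$ with the line $br$. Each $r'$ must be fresh at that moment, and girth together with a suitable degeneracy ordering of $\mathcal H_m$ is what should let me arrange this. Everything else is a finite list of polynomial non-incidence conditions, which generic choices avoid. If this ordering cannot be arranged for the amalgamation above, I would modify the combinatorial construction so that it has such a degeneracy order, which I expect is the genuinely delicate part.
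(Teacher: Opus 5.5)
There is a fatal gap at the very first step. If $A=R\cup B$ with neither $R$ nor $B$ containing three collinear points, then $A_1=R$, $A_2=B$ is already a partition of $A$ into $m=2$ sets with no three collinear points. So the second property fails outright, whatever else you build.

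The same contradiction appears inside your combinatorial reduction. Requirement (i) says every edge of $\mathcal H_m$ meets both $R$ and $B$. Colouring all of $R$ with one colour and all of $B$ with another is therefore a proper $2$-colouring, and (iii) can never hold for $m\ge 2$. This is exactly where your Tutte-style induction breaks. Attaching a copy of $\mathcal H_m$ to a monochromatic subset of $R'$ by edges through its $B$-vertices only forbids that colour on the $B$-vertices of the copy. The $R$-vertices of the copy remain free to take it, so no new colour is forced. No realisation argument in step (iv) can repair this, because the obstruction is purely combinatorial.

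The missing idea is that the half-size general-position subset in the first property must be chosen depending on $P$, not read off from one global partition of $A$. A global partition into finitely many general-position pieces is precisely what the second property forbids. The paper achieves this by making the collinearity hypergraph equal to the triangle hypergraph on the edges of $K_{\mb N}$:

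\begin{itemize}
\item With algebraically independent $t_i$ and $P_{i,j}=(t_i+t_j,\,t_i^2+t_it_j+t_j^2)$, three points are collinear exactly when the corresponding three edges form a triangle.
\item A finite $P$ is then a finite graph. A maximum cut gives a bipartite, hence triangle-free, subgraph with at least half the edges. Note that this cut depends on $P$.
\item Ramsey's theorem ($R(3,\dots,3)<\infty$) says no finite edge-colouring of $K_{\mb N}$ avoids a monochromatic triangle, so no finite partition of $A$ works.
\end{itemize}

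The point is that the local bipartitions cannot be glued into a single finite one. Any construction in which the first property is witnessed by a fixed partition of $A$ is doomed from the start.
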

This resolves a question of Erd\H{o}s, Ne\v{s}et\v{r}il, and R\"{o}dl \cite{Er92b}. After sharing our proof of Theorem~\ref{thm:main} with R\"{o}dl, he noted in personal communication that Theorem~\ref{thm:main} can be deduced from work of \cite[Theorem~1.7]{ReiRodSal24} (noting that the only collinear triples in $[3]^{n}$ correspond to $3$-term arithmetic progressions and then a generic projection provides the result for Theorem~\ref{thm:main}). Due to the comparative simplicity of our proof, we record it here.

We further remark that one can pose a finitary version of the question by considering a set $A$ of $k$ points such that every subset satisfies the first item in Theorem~\ref{thm:main}, and asking how many sets $A_i$ are required to partition $A$ into pieces such that each piece has no three on a line. The construction here gives that $\Omega(\log k/\log\log k)$ sets may be required; determining the true dependence appears to be an interesting question. Note that an $O(\log k)$ upper bound follows in the finitary question via iteratively applying the first condition. (Our proof also gives a stronger quantitative dependence compared with \cite{ReiRodSal24} as we avoid the use of the density Hales--Jewett theorem.)

This theorem can be viewed as a geometric question of ``Pisier'' type. We refer the reader to Ne\v{s}et\v{r}il, R\"{o}dl and Sales \cite{NRS24} and Reiher, R\"{o}dl and Sales \cite{ReiRodSal24} for context, background and recent results regarding such questions.

\subsection{Comment on use of AI}
The construction and proof were generated by a model internal to OpenAI. The human authors digested the proof and have presented it in a human readable form (and modified it for clarity and elegance).

\section{Proof of Theorem~\ref{thm:main}}

Let $(t_i)_{i\in \mb{N}}$ be a sequence of real numbers that are algebraically independent.\footnote{Such a set may be constructed greedily; note that given $t_1,\ldots,t_k$ there are only countably many values $t_{k+1}^{\ast}$ such that there exists a nonzero polynomial $P$ with integer coefficients such that $P(t_1,\ldots,t_k,t_{k+1}^{\ast}) = 0$. Taking $t_{k+1}^{\ast}$ away from this countable set completes the proof.} We now construct $A$ to be the set of points indexed by $(i,j)\in \binom{\mb{N}}{2}$ such that $(i,j)\to (t_i + t_j, t_i^2 + t_it_j + t_j^2)$. We adopt the shorthand that $P_{i,j} = (t_i + t_j, t_i^2 + t_it_j + t_j^2)$.

We first classify the collinear triples. 
\begin{claim}\label{claim:colinear}
Let $i,j,k,l,m,n$ be distinct indices. Then  
\begin{itemize}
\item $P_{i,j}$, $P_{j,k}$, $P_{i,k}$ are collinear
\item $P_{i,j}$, $P_{i,k}$, $P_{i,\ell}$ are not collinear
\item $P_{i,j}$, $P_{j,k}$, $P_{k,\ell}$ are not collinear
\item $P_{i,j}$, $P_{i,k}$, $P_{\ell,m}$ are not collinear
\item $P_{i,j}$, $P_{k,\ell}$, $P_{m,n}$ are not collinear
\end{itemize}
\end{claim}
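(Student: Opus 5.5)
The plan is to compute the collinearity determinant of three points $P_{a,b},P_{c,d},P_{e,f}$ explicitly as a polynomial in the $t$'s. For the first item we then simply check that it vanishes identically. For the remaining items we show that it is a nonzero polynomial with integer coefficients, so algebraic independence of the $t_i$ forces it to be nonzero at the specific values. The basic identity is that for $P_{i,j}$ and $P_{i,k}$ (sharing index $i$),
\[
(t_i^2+t_it_j+t_j^2)-(t_i^2+t_it_k+t_k^2) = (t_j-t_k)(t_i+t_j+t_k),
\]
while the difference of first coordinates is $t_j-t_k$. Hence the slope of the segment $P_{i,j}P_{i,k}$ is $t_i+t_j+t_k$, which is symmetric in $i,j,k$. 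For the first item, all three segments among $P_{i,j},P_{j,k},P_{i,k}$ therefore have the same slope $t_i+t_j+t_k$, so the points are collinear. For the second item, the slopes of $P_{i,j}P_{i,k}$ and $P_{i,j}P_{i,\ell}$ are $t_i+t_j+t_k$ and $t_i+t_j+t_\ell$. These differ because $t_k\neq t_\ell$, which follows from algebraic independence or even just distinctness. The third item is the same: $P_{i,j}P_{j,k}$ has slope $t_i+t_j+t_k$ and $P_{j,k}P_{k,\ell}$ has slope $t_j+t_k+t_\ell$, and these differ since $t_i\ne t_\ell$.

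For the fourth and fifth items, no convenient symmetric slope formula applies directly. Here I would write the collinearity condition as the determinant
\[
D=\det\begin{pmatrix}1 & x_1 & y_1\\ 1 & x_2 & y_2\\ 1& x_3& y_3\end{pmatrix},
\]
with $x=t_a+t_b$ and $y=t_a^2+t_at_b+t_b^2$. This $D$ is an integer polynomial in the four or six variables involved. Algebraic independence reduces the task to showing that $D$ is not the zero polynomial. To do this I would exhibit a monomial with nonzero coefficient, or specialize some variables to numbers and show the result is nonzero.

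For the fourth item, subtract the first row from the second. This turns row two into $(0,\,t_k-t_j,\,(t_k-t_j)(t_i+t_j+t_k))$, so $D=(t_k-t_j)\cdot D'$, where
\[
D' = (y_3-y_1) - (t_i+t_j+t_k)(x_3-x_1),\qquad x_3=t_\ell+t_m,\ y_3=t_\ell^2+t_\ell t_m+t_m^2.
\]
The term $t_\ell t_m$ appears in $D'$ with coefficient $1$ and appears in no other term. Indeed, $(t_i+t_j+t_k)(x_3-x_1)$ only involves products having a factor from $\{t_i,t_j,t_k\}$, and $y_1$ involves only $t_i,t_j$. So $D'\ne 0$ as a polynomial.

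For the fifth item, all six indices are distinct and $D$ is a homogeneous cubic. I would expand along the structure $D=x_2y_3-x_3y_2-x_1y_3+x_3y_1+x_1y_2-x_2y_1$ and look at a monomial such as $t_k t_m^2$. It arises only from $x_2y_3=(t_k+t_\ell)(t_m^2+\dots)$, with coefficient $1$. No other term contains both an index from pair two and an index from pair three except $x_3y_2$, and that term yields $t_m t_k^2$ rather than $t_k t_m^2$. So $D\ne 0$.

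The main obstacle is this bookkeeping in the last two cases: choosing a monomial that provably survives cancellation. I expect it to be routine once the determinant is written down.
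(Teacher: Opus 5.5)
Your proof is correct. It differs from the paper's mainly in how items 2 and 3 are handled.

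The paper treats all four non-collinearity items uniformly. It takes the full $3\times 3$ determinant, exhibits one monomial with nonzero coefficient ($t_kt_\ell^2$ for items 2--4, $t_it_m^2$ for item 5), and invokes algebraic independence. You instead reuse the identity behind item 1: the chord from $P_{a,b}$ to $P_{a,c}$ has slope $t_a+t_b+t_c$. You then settle items 2 and 3 by comparing two slopes at a common point.

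What your route buys is a sharper picture. Items 1--3 hold for any pairwise distinct reals $t_i$. Genericity is genuinely needed only for items 4 and 5, whose determinants can vanish for special distinct values. For example, with $(t_i,t_j,t_k)=(0,1,2)$ the line through $P_{i,j},P_{i,k}$ is $y=3x-2$, and it contains $P_{\ell,m}$ for $t_\ell=3/2$, $t_m=(3+\sqrt{13})/4$.

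For items 4 and 5 you use the same idea as the paper, a surviving monomial, with different choices:
\begin{itemize}
\item In item 4 you factor out $t_k-t_j$ first and find $t_\ell t_m$ in the quotient, while the paper reads $t_kt_\ell^2$ directly off the unfactored determinant.
\item In item 5 your $t_kt_m^2$ comes from the diagonal term $x_2y_3$, whereas the paper's $t_it_m^2$ actually comes from the term $-x_1y_3$. Either works.
\end{itemize}

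The paper's uniform monomial check is more compact. Your version makes explicit which collinearity facts are purely combinatorial and which depend on the algebraic independence of the $t_i$.
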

\begin{proof}
Recall that three points $(x_i,y_i)\in \mb{R}^{2}$ are collinear if and only if
\[
\on{det}\begin{pmatrix}
1 & x_1 & y_1\\
1 & x_2 & y_2\\
1 & x_3 & y_3
\end{pmatrix}=0.
\]

For the first part of Claim~\ref{claim:colinear}, we have that 
\[
\on{det}\begin{pmatrix}
1 & t_i+t_j & t_i^2+t_it_j+t_j^2\\
1 & t_i + t_k & t_i^2+t_it_k+t_k^2\\
1 & t_j+ t_k & t_j^2+t_jt_k+t_k^2
\end{pmatrix} = \on{det}\begin{pmatrix}
1 & t_i+t_j & t_i^2+t_it_j + t_j^2\\
0&  t_k-t_j & (t_k-t_j)(t_i+t_j+t_k)\\
0 & t_k-t_i & (t_k-t_i)(t_i+t_j+t_k)
\end{pmatrix}=0
\]
as desired. 

For the second claim, taking $P_{i,j} = (x_1,y_1)$, $P_{i,k} = (x_2,y_2)$ and $P_{i,\ell} = (x_3,y_3)$ note that 
\[
\on{det}\begin{pmatrix}
1 & t_i+ t_j & t_i^2+t_it_j+t_j^2\\
1 & t_i + t_k & t_i^2+t_it_k+t_k^2\\
1 & t_i+ t_\ell & t_i^2+t_it_\ell+t_\ell^2
\end{pmatrix}\neq 0.
\]
This is seen via examining the coefficient of $t_{\ell}^2t_k$ which is nonzero, since the only contribution to the determinant comes from the main diagonal in the Laplace expansion. For the third bullet, $P_{i,j} = (x_1,y_1)$, $P_{j,k} = (x_2,y_2)$ and $P_{k,\ell} = (x_3,y_3)$ we see that coefficient of $t_{\ell}^2t_k$ is nonzero via an analogous argument with the main diagonal. For the fourth bullet, $P_{i,j} = (x_1,y_1)$, $P_{i,k} = (x_2,y_2)$ and $P_{\ell,m} = (x_3,y_3)$; we see that coefficient of $t_{\ell}^2t_k$ is nonzero analogously. Finally for the fifth bullet with $P_{i,j} = (x_1,y_1)$, $P_{k,\ell} = (x_2,y_2)$ and $P_{m,n} = (x_3,y_3)$ we have that the coefficient of $t_m^2t_i$ is nonzero analogously. By the algebraic independence of $t_i$, each of the determinants is nonzero, as desired. 
\end{proof}
\begin{remark}
The model proceeded slightly differently here and instead more directly computes the determinant in one case and otherwise exhibits a valuation which is nonzero in the remaining cases.
\end{remark}

Given Claim~\ref{claim:colinear}, we complete our proof by arguing that the constructed $A$ satisfies both properties of Theorem~\ref{thm:main}.  For the first property, fix any set of points $P\subseteq A$; this point set can be naturally identified with a graph $G$ having $|P|$ edges. Since any graph $G$ with $|P|$ edges has a bipartite subgraph $G'$ with $\ge |P|/2$ edges, we have the desired first property.

For the second property, suppose that $A$ could be partitioned into $m$ sets, each of which has no three collinear points.  By Claim~\ref{claim:colinear}, this would correspond to an  $m$-coloring of $K_{\mb{N}}$ with no monochromatic triangle; this contradicts Ramsey's theorem, and in particular the finiteness of $R(3,\cdots,3)$.

\section*{Acknowledgments}
We thank Noga Alon and Vojta R\"{o}dl for useful comments. This research was conducted during the period MS served as a Clay Research Fellow.

\end{document}